\newcommand{\Z}{\Bbb Z}
\newcommand{\N}{\Bbb N}
\newcommand{\Q}{\Bbb Q}
\def\NL{\hfill\break}
\def\ni{\noindent}
\newcommand{\set}[1]{\left\{#1\right\}}
\newcommand{\parth}[1]{\left(#1\right)}
\newtheorem{theo}{Theorem}[section]
\newtheorem{df}[theo]{Definition}
\newtheorem{cor}[theo]{Corollary}
\newtheorem{lem}[theo]{Lemma}
\newtheorem{ex}[theo]{Example}
\newtheorem{prop}[theo]{Proposition}
\begin{document}

\title{Ore Extensions of Extended Symmetric and Reversible Rings}

\maketitle

\begin{center}
{\bf Mohamed Louzari and L'moufadal Ben Yakoub}
\end{center}
{\small
\begin{center}
Dept.\ of Mathematics, Abdelmalek Essaadi University
\end{center}
\begin{center}
Faculty of sciences, B.P. 2121 Tetouan, Morocco
\end{center}
\begin{center}
benyakoub@hotmail.com, mlouzari@yahoo.com
\end{center}}

\begin{abstract}
Let $\sigma$ be an endomorphism and $\delta$ an $\sigma$-derivation of a ring $R$. In this paper, we show that if $R$ is $(\sigma,\delta)$-skew Armendariz and $a\sigma(b)=0$ implies $ab=0$ for $a,b\in R$. Then $R$ is symmetric (respectively,  reversible) if and only if $R$ is $\sigma$-symmetric (respectively,  $\sigma$-reversible) if and only if $R[x;\sigma,\delta]$ is symmetric (respectively,  reversible). Moreover, we study on the relationship between the Baerness, quasi-Baerness and p.q.-Baerness of a ring $R$ and these of the Ore extension $R[x;\sigma,\delta]$. As a consequence we obtain a partial generalization of \cite{hong/2000}.
\end{abstract}

\footnote[0]{

\ni 2000 Mathematics Subject Classification. 16U80, 16S36
\NL Key words and phrases. Armendariz rings, symmetric rings, reversible rings, Ore extensions}

\section*{Introduction}

Throughout this paper, $R$ denotes an associative ring with unity. $\sigma$ is a ring endomorphism, and $\delta$ an $\sigma$-derivation of $R$, that is, $\delta$ is an additive map such that $\delta(ab)=\sigma(a)\delta(b)+\delta(a)b$ for all $a,b\in R$. We denote $R[x;\sigma,\delta]$ the Ore extension whose elements are polynomials over $R$, the addition is defined as usual and the multiplication subject to the relation $xa=\sigma(a)x+\delta(a)$ for all $a\in R$. A ring $R$ is called {\it symmetric} if $abc=0$ implies $acb=0$ for all $a,b,c\in R$. A ring $R$ is called {\it
reversible} if $ab=0$ implies $ba=0$ for all $a,b\in R$. Reduced rings (i.e., rings with no nonzero nilpotent elements) are symmetric by Anderson and Camillo \cite[Theorem~1.3]{anderson}. Commutative rings are clearly symmetric, symmetric rings are clearly reversible. Polynomial rings over reversible rings need not to be reversible, and polynomial rings over symmetric rings need not to be symmetric (see \cite{kim} and \cite{wang}). From \cite{baser/2007}, a ring $R$ is called {\it right $($respectively,  left$)$ $\sigma$-reversible} if whenever $ab=$ for $a,b\in R$, $b\sigma(a)=0$ (respectively,  $\sigma(b)a=0$). Also, by \cite{kwak}, a ring $R$ is called {\it right $($respectively,  left$)$ $\sigma$-symmetric} if whenever $abc=0$ for $a, b, c \in R$, $ac\sigma(b)=0$ (respectively,  $\sigma(b)ac=0$). Rege and Chhawchharia \cite{chhawchharia}, called a ring $R$ an {\it Armendariz} if whenever polynomials $f=\sum_{i=0}^{n}a_ix^i,\;g=\sum_{j=0}^{m}b_jx^j\in R[x]$ satisfy $fg=0$, then $a_ib_j=0$ for each $i,j$. The term Armendariz was introduced by Rege and Chhawchharia \cite{chhawchharia}. This nomenclature was used by them since it was Armendariz \cite[Lemma 1]{armendariz1}, who initially showed that a reduced ring always satisfies this condition. According to Krempa \cite{krempa}, an endomorphism $\sigma$ of a ring $R$ is called to be {\it rigid} if $a\sigma(a)=0$ implies $a=0$ for all $a\in R$. A ring $R$ is called $\sigma$-{\it rigid} if there exists a rigid endomorphism $\sigma$ of $R$. Note that any rigid endomorphism of a ring $R$ is a monomorphism and $\sigma$-rigid rings are reduced by Hong et al. \cite{hong/2000}. Also, by \cite[Theorem 2.8(1)]{kwak}, a ring $R$ is $\sigma$-rigid if and only if $R$ is semiprime right $\sigma$-symmetric and $\sigma$ is a monomorphisme, so right $\sigma$-symmetric ($\sigma$-reversible) rings are a generalization of  $\sigma$-rigid rings. Properties of $\sigma$-rigid rings have been studied in \cite{hashemi/quasi, hong/2000, hong/2003, krempa}. In \cite{hong/2003}, Hong et al. defined a ring $R$ with an endomorphism $\sigma$ to be $\sigma$-{\it skew Armendariz} if whenever polynomials $f=\sum_{i=0}^{n}a_ix^i,\;g=\sum_{j=0}^{m}b_jx^j\in R[x;\sigma]$ satisfy $fg=0$ then $a_i\sigma^i(b_j)=0$ for each $i,j$. According to Hong et al. \cite{hong/2006}. A ring $R$ is said to be $\sigma$-{\it Armendariz}, if whenever polynomials $f=\sum_{i=0}^{n}a_ix^i,\;g=\sum_{j=0}^{m}b_jx^j\in
R[x;\sigma]$ satisfy $fg=0$ then $a_ib_j=0$ for each $i,j$. From Hashemi and Moussavi \cite{hashemi/skew}, a ring $R$ is called a $(\sigma,\delta)$-{\it skew Armendariz } ring if for $p=\sum_{i=0}^{n}a_ix^i$ and $q=\sum_{j=0}^{m}b_jx^j$ in $R[x;\sigma,\delta]$, $pq=0$ implies $a_ix^ib_jx^j=0$ for each $i,j$. By Hashemi and Moussavi \cite{hashemi/quasi}, a ring $R$ is $\sigma$-{\it compatible} if for each $a,b\in R$, $a\sigma(b)=0$ if and only if $ab=0$. Moreover, $R$ is said to be $\delta$-{\it compatible} if for each $a,b\in R$, $ab=0$ implies $a\delta(b)=0$. If $R$ is both $\sigma$-compatible and $\delta$-compatible, we say that $R$ is $(\sigma,\delta)$-{\it compatible}. A ring $R$ is $\sigma$-rigid if and only if $R$ is $(\sigma,\delta)$-compatible and reduced \cite[Lemma~2.2]{hashemi/quasi}.

\par In this paper, we study the transfert of symmetry ($\sigma$-symmetry) and reversibility ($\sigma$-reversibility) from $R$ to $R[x;\sigma,\delta]$. We show, that if $R$ is $(\sigma,\delta)$-skew Armendariz and $a\sigma(b)=0$ implies $ab=0$ for $a,b\in R$. Then $R$ is symmetric (respectively  reversible) if and only if $R$ is $\sigma$-symmetric (respectively  $\sigma$-reversible) if and only if if $R[x;\sigma,\delta]$ is symmetric (respectively,  reversible). As a consequence we obtain a generalization of \cite[Theorem 3.6]{hong/2006}, \cite[Proposition 3.4]{huh}, \cite[Proposition 2.4]{kim}, \cite[Corollary 2.11]{baser/2007} and \cite[Theorem 2.10]{kwak}. A connection between reversibility (respectively,  symmetry) and $\sigma$-reversibility (respectively,  $\sigma$-symmetry) of a ring is given. Moreover, we study on the relationship between the Baerness, quasi-Baerness and p.q.-Baerness of a ring $R$ and these of the Ore extension $R[x;\sigma,\delta]$. As a consequence we obtain a partial generalization of \cite[Theorem 11, Corollaries 12 and 15]{hong/2000}.

\section{Preliminaries and Examples}
\smallskip

We begin with the following definition.

\begin{df}\label{def}Let $\sigma$ be an endomorphism of a ring $R$. We say that $R$
satisfies the condition $(\mathcal{C_{\sigma}})$ if whenever $a\sigma(b)=0$ with $a,b\in R$, then $ab=0$.
\end{df}

In the Ore extension $R[x;\sigma\,\delta]$, we have for $n\geq 0$, $x^na=\sum_{i=0}^nf_i^n(a)x^i$ where $f_i^n\in End(R,+)$ will denote the map which is the sum of all possible words in $\sigma,\delta$ built with $i$ letters $\sigma$ and $n-i$ letters $\delta$. (In particular, $f_n^n=\sigma^n, f_0^n=\delta^n $), \cite[Lemma~4.1]{lam}.

\begin{lem}\label{lem1}Let $R$ be an $(\sigma,\delta)$-skew Armendariz ring satisfying the condition $(\mathcal{C_{\sigma}})$ and $n\geq 1$ a natural number. If $ab=0$ then $\sigma^n(a)b=\delta^n(a)b=0$ for all $a,b\in R$.
\end{lem}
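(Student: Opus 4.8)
The plan is to convert the scalar hypothesis $ab=0$ into the vanishing of a single product of two polynomials in the Ore extension, and then let the $(\sigma,\delta)$-skew Armendariz property do the bookkeeping. The guiding observation is that the operators $f_i^n$ arise precisely as the left coefficients of $x^n a$, via the formula $x^n a=\sum_{i=0}^n f_i^n(a)x^i$ recalled above, with $f_n^n=\sigma^n$ and $f_0^n=\delta^n$. So the two quantities I want, $\sigma^n(a)b$ and $\delta^n(a)b$, are nothing but the top and bottom coefficients produced when $x^n a$ is multiplied on the right by $b$, and I can read them off from one well-chosen zero product. A pleasant feature is that this handles every $n\geq 1$ uniformly, with no induction needed.

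Concretely, I would set $p=x^n a$ and $q=b$. Since multiplication in $R[x;\sigma,\delta]$ is associative and $ab=0$, we get $pq=(x^n a)b=x^n(ab)=0$. Writing $p=\sum_{i=0}^n f_i^n(a)x^i$ and $q=b$ (a constant), the $(\sigma,\delta)$-skew Armendariz hypothesis yields $f_i^n(a)\,x^i\,b=0$ in $R[x;\sigma,\delta]$ for every $i$. This is the heart of the argument: a single vanishing product simultaneously supplies all the coefficient relations I need.

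It then remains to extract the two conclusions. The index $i=0$ gives immediately $\delta^n(a)b=f_0^n(a)\,b=0$, so the $\delta$-part is free and uses only skew Armendariz. For the $\sigma$-part, the index $i=n$ reads $\sigma^n(a)\,x^n b=0$; expanding $x^n b=\sum_{k=0}^n f_k^n(b)x^k$ and comparing the top coefficient gives $\sigma^n(a)\sigma^n(b)=0$. Here is where condition $(\mathcal{C}_{\sigma})$ enters and where the only genuinely delicate point lies: one cannot conclude $\sigma^n(a)b=0$ in a single step. Instead, applying $(\mathcal{C}_{\sigma})$ to the pair $(\sigma^n(a),\sigma^{n-1}(b))$ turns $\sigma^n(a)\sigma(\sigma^{n-1}(b))=0$ into $\sigma^n(a)\sigma^{n-1}(b)=0$, and iterating this descent $n$ times yields $\sigma^n(a)b=0$.

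The main obstacle is simply spotting the right product $pq=x^n(ab)$; once that is in hand, the skew Armendariz property isolates the coefficients cleanly, and the remaining work is the repeated use of $(\mathcal{C}_{\sigma})$ to strip the $\sigma$'s off $b$ in the leading-coefficient relation. It is worth noting that condition $(\mathcal{C}_{\sigma})$ is consumed only by the $\sigma^n$ statement, whereas the $\delta^n$ statement is a direct consequence of the $(\sigma,\delta)$-skew Armendariz hypothesis alone.
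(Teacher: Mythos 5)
Your proof is correct and is essentially the paper's own argument: both convert the hypothesis $ab=0$ into the vanishing product $(x^{n}a)\,b=x^{n}(ab)=0$ in $R[x;\sigma,\delta]$ and then apply the $(\sigma,\delta)$-skew Armendariz property together with an iterated use of condition $(\mathcal{C}_{\sigma})$ on the leading coefficient. The only differences are presentational: the paper reduces to $n=1$ (taking $f=xa=\sigma(a)x+\delta(a)$, $g=b$) and finishes by citing Proposition \ref{prop3.2}, whereas you handle all $n$ uniformly with $p=x^{n}a$ and unwind that proposition's content --- coefficient extraction plus the $(\mathcal{C}_{\sigma})$ descent --- by hand.
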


\begin{proof}It suffices to show the result for $n=1$. Let $f=\sigma(a)x+\delta(a)$ and
$g=b$ such that $ab=0$. Then $fg=\sigma(a)xb+\delta(a)b=\sigma(ab)x+\sigma(a)\delta(b)+
\delta(a)b=\sigma(ab)x+\delta(ab)=0$. By Proposition \ref{prop3.2}, we have $\sigma(a)b=\delta(a)b=0$.
\end{proof}

\begin{lem}\label{lem3}Let $R$ be an $(\sigma,\delta)$-skew Armendariz reversible ring satisfying the condition $(\mathcal{C_{\sigma}})$. If $ab=0$, then $af_i^j(b)=0$ for all $i,j\;\; (i\leq j)$.
\end{lem}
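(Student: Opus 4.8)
The plan is to isolate one elementary ``closure'' property and then bootstrap it. The property is: whenever $ac=0$ for some $c\in R$, one also has $a\sigma(c)=0$ and $a\delta(c)=0$. Every $f_i^j(b)$ is, by its very definition, a finite sum of words obtained from $b$ by successively applying the letters $\sigma$ and $\delta$, so once the closure property is available I can reach each such word from the hypothesis $ab=0$ by peeling off one letter at a time.

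First I would prove the closure property. Assume $ac=0$. Since $R$ is reversible, $ca=0$; applying Lemma~\ref{lem1} with $n=1$ to the pair $(c,a)$ gives $\sigma(c)a=0$ and $\delta(c)a=0$; reversibility once more yields $a\sigma(c)=0$ and $a\delta(c)=0$. This invokes exactly the standing hypotheses, namely reversibility together with the $n=1$ case of Lemma~\ref{lem1} (which itself rests on the $(\sigma,\delta)$-skew Armendariz property and the condition $(\mathcal{C_{\sigma}})$).

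Next I would induct on the length of a word. Writing a word of length $j$ as $w=\tau_1\cdots\tau_j$ with each $\tau_k\in\{\sigma,\delta\}$ and $w(b)=\tau_1(\cdots\tau_j(b)\cdots)$, I claim that $ab=0$ forces $a\,w(b)=0$. The case $j=0$ is the hypothesis, and for the step I write $w(b)=\tau_1(w'(b))$ with $w'$ of length $j-1$, use the induction hypothesis to get $a\,w'(b)=0$, and apply the closure property with $c=w'(b)$ to conclude $a\,\tau_1(w'(b))=0$ whether $\tau_1$ is $\sigma$ or $\delta$. Since $f_i^j$ is the sum of all words with $i$ letters $\sigma$ and $j-i$ letters $\delta$, summing gives $a\,f_i^j(b)=\sum_w a\,w(b)=0$ for all $i\le j$, which is the assertion. (Equivalently, one may induct on $j$ through the recursion $f_i^j=\sigma\circ f_{i-1}^{j-1}+\delta\circ f_i^{j-1}$, the closure property killing each summand under left multiplication by $a$.)

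The one genuine obstacle is a mismatch of sides: Lemma~\ref{lem1} delivers relations of the form $\sigma^n(a)b=0$, in which the operators act on the \emph{left} factor, whereas the target $a\,f_i^j(b)=0$ needs the operators to act on $b$, the \emph{right} factor. Reversibility is precisely what lets me transpose the product, apply Lemma~\ref{lem1}, and transpose back, so it must be used both before and after. A related point is that Lemma~\ref{lem1} on its own produces only the pure powers $\sigma^n$ and $\delta^n$; the mixed words such as $\sigma\delta\sigma$ occurring inside $f_i^j$ are obtained not in one stroke but by iterating the $n=1$ closure property letter by letter, which is exactly why the word-length induction is the right bookkeeping.
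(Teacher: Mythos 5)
Your proof is correct, and it is in fact tighter than the paper's own argument. Both proofs share the same engine: transpose the product with reversibility, apply Lemma~\ref{lem1}, and transpose back. But the paper stops there: from $ba=0$ it gets $\sigma^n(b)a=\delta^n(b)a=0$, flips to $a\sigma^n(b)=a\delta^n(b)=0$, and then simply asserts ``Thus $af_i^j(b)=0$.'' That last step is a genuine jump, since $f_i^j$ is a sum of \emph{mixed} words such as $\sigma\delta$ and $\delta\sigma$, which are not pure powers $\sigma^n$ or $\delta^n$ and so are not covered by what was actually established. Your reformulation --- extract the one-letter closure property ($ac=0$ implies $a\sigma(c)=a\delta(c)=0$, proved by the flip--apply--flip maneuver with $n=1$) and then induct on word length, peeling one letter at a time --- is exactly the bookkeeping needed to reach every mixed word, and summing over the words constituting $f_i^j$ finishes it. So your route is not merely an alternative; it completes an argument the paper leaves gapped, at the modest cost of invoking reversibility at every letter rather than twice in total.
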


\begin{proof}Let $a,b\in R$ such that $ab=0$, since $R$ is reversible we have $ba=0$ and
so $\sigma^n(b)a=\delta^n(b)a=0$, by Lemma \ref{lem1}. Also, we have  $a\sigma^n(b)=a\delta^n
(b)=0$ for all $n\in\N$. Thus $af_i^j(b)=0$ for all $i,j$.
\end{proof}

In the next, we show some connections between $(\sigma,\delta)$-skew Armendariz rings, $\sigma$-rigid rings and rings with the condition $(\mathcal{C}_{\sigma})$.

\begin{ex}\label{ex2.1} Let $\Z$ be the ring of integers and $\Z_4$ be
the ring of integers modulo 4. Consider the ring
$$R=\set{\begin{pmatrix}
  a & \overline{b} \\
  0 & a
\end{pmatrix}| a\in\Z\;,\overline{b}\in \Z_4}.$$  Let $\sigma\colon R\rightarrow R$ be an
endomorphism defined by $\sigma\left(\begin{pmatrix}
  a & \overline{b} \\
  0 & a
\end{pmatrix}\right)=\begin{pmatrix}
  a & -\overline{b} \\
  0 & a
\end{pmatrix}$.\NL Take the ideal $I=\set{\begin{pmatrix}
  a & \overline{0} \\
  0 &a
\end{pmatrix}| a\in 4\Z}$ of $R$. Consider the factor ring $$R/I\cong\set{\begin{pmatrix}
  \overline{a} & \overline{b} \\
  0 &\overline{a}
\end{pmatrix}| \overline{a},\overline{b}\in 4\Z}.$$ \NL$(i)$ $R/I$ is not
$\overline{\sigma}$-skew Armendariz. In fact, $\left(\begin{pmatrix}
 \overline{2} & \overline{0} \\
  0 & \overline{2}
\end{pmatrix}+\begin{pmatrix}
 \overline{2} & \overline{1} \\
  0 & \overline{2}
\end{pmatrix}x\right)^2=0\in(R/I)[x;\overline{\sigma}]$, but $\begin{pmatrix}
 \overline{2} & \overline{1} \\
  0 & \overline{2}
\end{pmatrix}\overline{\sigma}\begin{pmatrix}
 \overline{2} & \overline{0} \\
  0 & \overline{2}
\end{pmatrix}\neq 0$.\NL$(ii)$ $R/I$ satisfies the condition $(\mathcal{C}_{\overline{\sigma}})$. Let $A=\begin{pmatrix}
 \overline{a} & \overline{b} \\
  0 & \overline{a}
\end{pmatrix}\;,B=\begin{pmatrix}
 \overline{a'} & \overline{b'} \\
  0 & \overline{a'}
\end{pmatrix}\in R/I$. If $A\overline{\sigma}(B)=0$ then $\overline{aa'}=0$ and
$\overline{ab'}=\overline{ba'}=0$, so that $AB=0$.
\end{ex}

\begin{ex}\label{ex2.2}Consider a ring of polynomials over $\Z_2$,
$R=\Z_2[x]$. Let $\sigma\colon R\rightarrow R$ be an endomorphism
defined by $\sigma(f(x))=f(0)$. Then:
\NL $(i)$ $R$ does not satisfy the condition $(\mathcal{C}_{\sigma})$. Let $f=\overline{1}+x$, $g=x\in R$, we have
$fg=(\overline{1}+x)x\neq 0$, however $f\sigma(g)=(\overline{1}+x)\sigma(x)=0$.
\NL $(ii)$ $R$ is $\sigma$-skew Armendariz \cite[Example~5]{hong/2003}.
\end{ex}

\begin{ex}\label{ex2.3}Consider the ring $$R=\set{\begin{pmatrix}
  a & t \\
  0 & a
\end{pmatrix}| a\in \Z\;,t\in \Q},$$ where $\Z$ and $\Q$ are the
set of all integers and all rational numbers, respectively. The ring
$R$ is commutative, let $\sigma\colon R\rightarrow R$ be an
automorphism defined by $\sigma\left(\begin{pmatrix}
  a & t \\
  0 & a
\end{pmatrix}\right)=\begin{pmatrix}
  a & t/2 \\
  0 & a
\end{pmatrix}$.\NL
$(i)$ $R$ is not $\sigma$-rigid. $\begin{pmatrix}
  0 & t \\
  0 & 0
\end{pmatrix}\sigma\left(\begin{pmatrix}
  0 & t \\
  0 & 0
\end{pmatrix}\right)=0$, but $\begin{pmatrix}
  0 & t \\
  0 & 0
\end{pmatrix}\neq 0$, if $t\neq 0$.\NL $(ii)$ $R$ satisfies the condition $(\mathcal{C_{\sigma}})$. Let $\left(%
\begin{array}{cc}
  a & t \\
  0 & a \\
\end{array}%
\right)$ and $\left(%
\begin{array}{cc}
  b & x \\
  0 & b \\
\end{array}%
\right)\in R$ such that $$\left(%
\begin{array}{cc}
  a & t \\
  0 & a \\
\end{array}%
\right)\sigma\left(\left(%
\begin{array}{cc}
  b & x \\
  0 & b \\
\end{array}%
\right)\right)=0,$$ hence $ab=0=ax/2+tb$, so $a=0$ or $b=0$. In each
case,
$ax+tb=0$, hence $\left(%
\begin{array}{cc}
  a & t \\
  0 & a \\
\end{array}%
\right)\left(%
\begin{array}{cc}
  b & x \\
  0 & b \\
\end{array}%
\right)=0$. We have also the same for the converse.
\NL $(iii)$ $R$ is $\sigma$-skew Armendariz \cite[Example 1]{hong/2000}.
\end{ex}

Examples \ref{ex2.1} and \ref{ex2.2} shows that the $(\sigma,\delta)$-skew Armendariz property of a ring and the condition $(\mathcal{C_{\sigma}})$ are independent of each other. From \cite[Lemma 2.2]{hashemi/quasi}, \cite[Lemma 2.5]{louzari1} and Example \ref{ex2.3}. The class of $\sigma$-rigid rings is strictly included in the class of $(\sigma,\delta)$-skew Armendariz rings satisfying the condition $(\mathcal{C_{\sigma}})$.

\section{Reversibility and Symmetry of Ore Extensions}

\smallskip

From Isfahani and Moussavi \cite{isfahani}, a ring $R$ is called {\it skew Armendariz}, if for $f=\sum_{i=0}^{n}a_ix^i,\;g=\sum_{j=0}^{m}b_jx^j\in R[x;\sigma,\delta]$, $fg=0$ implies $a_0b_j=0$ for all $j$. Obviously, $(\sigma,\delta)$-skew Armendariz rings satisfying the condition $(\mathcal{C_{\sigma}})$ are skew Armendariz. But the converse is not true by Example \ref{ex2.4}.

\begin{prop}\label{prop3.2}Let $R$ be a ring, $\sigma$ an endomorphism, and $\delta$ an $\sigma$-derivation of $R$. If one of the following conditions is satisfied.\NL
$(i)$ $R$ is $(\sigma,\delta)$-skew Armendariz and satisfies the condition $(\mathcal{C_{\sigma}})$;
\NL$(ii)$ $R$ is skew Armendariz and $R[x;\sigma,\delta]$ is symmetric;\NL Then, for $f=\sum_{i=0}^{n}a_ix^i$ and $g=\sum_{j=0}^{m}b_jx^j$ $\in R[x;\sigma,\delta]$, $fg=0$ implies $a_ib_j=0$ for all $i,j$.
\end{prop}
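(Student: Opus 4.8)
The plan is to treat the two hypotheses separately, since they supply coefficient information of quite different strengths; in both cases the objective is to upgrade a weaker annihilation relation to the full conclusion $a_ib_j=0$.

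For (i), I would start from the $(\sigma,\delta)$-skew Armendariz conclusion that $fg=0$ forces $a_ix^ib_jx^j=0$ for each $i,j$. Expanding via the formula $x^ib_j=\sum_{k=0}^{i}f_k^i(b_j)x^k$ gives $a_ix^ib_jx^j=\sum_{k=0}^{i}a_if_k^i(b_j)x^{k+j}$, and since the powers $x^{k+j}$ for $0\le k\le i$ are pairwise distinct, every coefficient must vanish; in particular the top one, $a_i\sigma^i(b_j)=0$, using $f_i^i=\sigma^i$. Now I would apply the condition $(\mathcal{C}_{\sigma})$ repeatedly: reading $a_i\sigma^i(b_j)=0$ as $a_i\,\sigma(\sigma^{i-1}(b_j))=0$ yields $a_i\sigma^{i-1}(b_j)=0$, and iterating $i$ times collapses this to $a_ib_j=0$. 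This settles (i) with no induction.

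For (ii), the skew Armendariz hypothesis gives only the bottom relations $a_0b_j=0$ for all $j$, so I would run an induction on $n=\deg f$ and use symmetry of $R[x;\sigma,\delta]$ to peel off the constant term. The base case $n=0$ is immediate. For the inductive step, $a_0b_j=0$ for all $j$ gives $a_0g=\sum_j(a_0b_j)x^j=0$, hence $\tilde f g=(f-a_0)g=0$, where $\tilde f=f-a_0=f''x$ with $f''=\sum_{i=1}^{n}a_ix^{i-1}$ of degree at most $n-1$. Thus $f''xg=0$. Here I would invoke symmetry to permute the triple product $f''\cdot x\cdot g$ into $f''gx=0$, and then use that right multiplication by $x$ is injective in any Ore extension (it merely shifts degrees, so $px=0$ forces $p=0$) to conclude $f''g=0$. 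Since $\deg f''<n$, the induction hypothesis yields $a_ib_j=0$ for all $i\ge 1$, which together with $a_0b_j=0$ completes the step.

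The only routine parts are the coefficient bookkeeping in (i) and the factorization $\tilde f=f''x$ in (ii). The genuinely load-bearing idea is the symmetry step in (ii): skew Armendariz by itself cannot reach the higher coefficients, and it is exactly the rewriting $f''xg=0\Rightarrow f''gx=0$ permitted by symmetry—combined with injectivity of right multiplication by $x$—that forces the degree to drop and lets the induction proceed. I expect that to be the point requiring the most care.
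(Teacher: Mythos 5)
Your proposal is correct and follows essentially the same route as the paper: in (i) the paper likewise extracts the leading coefficient $a_i\sigma^i(b_j)=0$ from $a_ix^ib_jx^j=0$ and invokes $(\mathcal{C}_{\sigma})$, and in (ii) it likewise peels off $a_0$, writes the remainder as $f''x$, applies symmetry to get $f''gx=0$, and iterates. Your version merely makes explicit two steps the paper leaves implicit — the repeated application of $(\mathcal{C}_{\sigma})$ to collapse $\sigma^i$ and the injectivity of right multiplication by $x$ — which is a welcome clarification but not a different argument.
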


\begin{proof}Let $f=\sum_{i=0}^na_ix^i$, $g=\sum_{j=0}^mb_jx^j\in R[x;\sigma,\delta]$ such that $fg=0$.
\NL(i) Since $R$ is $(\sigma,\delta)$-skew Armendariz, then $a_ix^ib_jx^j=0$ for all $i,j$. Or
$a_ix^ib_jx^j=a_i\sum_{\ell=0}^if_{\ell}^i(b_j)x^{j+\ell}=a_i\sigma^i(b_j)x^{i+j}+p(x)=0$
where $p(x)$ is a polynomial of degree strictly less than $i+j$. Thus $a_i\sigma^i(b_j)=0$, and by the condition $(\mathcal{C_{\sigma}})$ we have $a_ib_j=0$ for all $i,j$.
\NL(ii) We have $(a_0+a_1x+\cdots+a_nx^n)(b_0+b_1x+\cdots+b_mx^m)=a_0(b_0+b_1x+\cdots+b_mx^m)+ (a_1x+\cdots+a_nx^n)(b_0+b_1x+\cdots+b_mx^m)=0$, or $a_0b_j=0$ for all $j$, because $R$ is skew Armendariz. So that
$(a_1x+\cdots+a_nx^n)(b_0+b_1x+\cdots+b_mx^m)=(a_1+\cdots+a_nx^{n-1})x (b_0+b_1x+\cdots+b_mx^m)=0$. Since
$R[x;\sigma,\delta]$ is symmetric, then we have $(a_1x+\cdots+a_nx^n)(b_0+b_1x+\cdots+b_mx^m)x=0$, hence $a_1b_j=0$
for all $j$. Continuing this process, we have $a_ib_j=0$ for all $i,j$.
\end{proof}

Proposition \ref{prop3.2} shows that, $\sigma$-skew Armendariz rings satisfying the condition $(\mathcal{C_{\sigma}})$ are $\sigma$-Armendariz.

\begin{ex}\label{ex2.4}Consider the ring $R=\Z_2[x]$. Let $\sigma\colon R\rightarrow R$ be
an endomorphism defined by $\sigma(f(x))=f(0)$. Then
\NL $(i)$ $R[y;\sigma]$ is not reversible $($so is not
symmetric$)$: Let $f=ay$ , $g=b\in R[y;\sigma]$ with
$a=\overline{1}+x$ and $b=x$, then $fg=ayb=a\sigma(b)y=0$. But,
$gf=bay=x(\overline{1}+x)y\neq 0$.
\NL $(ii)$ $R$ does not satisfy the condition $(\mathcal{C_{\sigma}})$ $($see Example \ref{ex2.2}$)$.
\NL $(iii)$ $R$ is skew Armendariz: Since $R$ is $\sigma$-skew Armendariz, by \cite[Example~5]{hong/2003}.
Then \cite[Theorem 2.2]{isfahani} implies that $R$ is skew Armendariz.
\NL $(iv)$ $R$ is not $\sigma$-Armendariz by \cite[Example 1.9]{hong/2006}.
\end{ex}

By Example \ref{ex2.4}, the conditions ``$R[x;\sigma,\delta]$ is symmetric'' and ``the condition $(\mathcal{C_{\sigma}})$'' in Proposition \ref{prop3.2} are not superfluous.

\begin{lem}\label{lem4}Let $R$ be an $(\sigma,\delta)$-skew Armendariz ring satisfying the condition $(\mathcal{C_{\sigma}})$. Then, for $f=\sum_{i=0}^na_ix^i, g=\sum_{j=0}^mb_jx^j$ and $h=\sum_{k=0}^pc_kx^k\in R[x;\sigma,\delta]$, if $fgh=0$ then $a_ib_jc_k=0$ for all $i,j,k$.
\end{lem}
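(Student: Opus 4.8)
The plan is to derive the conclusion from two successive applications of Proposition \ref{prop3.2}, \emph{peeling off} one factor at a time. First I would regroup the product as $f(gh)=0$ and view $gh$ as a single element of the Ore extension, writing $gh=\sum_{t} w_t x^t$ for suitable coefficients $w_t\in R$. Since $R$ is $(\sigma,\delta)$-skew Armendariz and satisfies $(\mathcal{C_{\sigma}})$, Proposition \ref{prop3.2}(i) applies to the two-factor product $f\cdot(gh)=0$ and yields $a_i w_t=0$ for all $i$ and all $t$.

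Next I would reconstruct a new vanishing product. Because each $a_i$ lies in $R$, left multiplication by $a_i$ acts coefficientwise, so $a_i(gh)=\sum_{t}(a_i w_t)x^t=0$. Using associativity in $R[x;\sigma,\delta]$ I regroup this as $(a_i g)h=a_i(gh)=0$, and again since $a_i$ is a constant, $a_i g=\sum_{j}(a_i b_j)x^j$. Now $(a_i g)h=0$ is once more a product of two polynomials, so a second application of Proposition \ref{prop3.2} gives $(a_i b_j)c_k=0$ for all $j,k$. As $i$ was arbitrary, this is exactly $a_i b_j c_k=0$ for all $i,j,k$.

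The one delicate point, and the step I would treat with care, is the reconstruction $a_i(gh)=\sum_{t}(a_i w_t)x^t$. This is legitimate precisely because $a_i$ is an element of $R$ rather than a polynomial of positive degree: a constant passes to the left of each $x^t$ with no $\sigma$- or $\delta$-corrections, so the coefficients of $a_i(gh)$ are literally the products $a_i w_t$, each of which we already know to vanish. The associativity of the Ore extension then lets me regroup $a_i(gh)$ as $(a_i g)h$ freely, and the same constancy of $a_i$ makes $a_i g=\sum_{j}(a_i b_j)x^j$ hold exactly. Note that, unlike Lemma \ref{lem3}, no reversibility hypothesis is needed here, since both invocations of Proposition \ref{prop3.2} require only the $(\sigma,\delta)$-skew Armendariz property together with the condition $(\mathcal{C_{\sigma}})$.
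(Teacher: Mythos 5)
Your proof is correct and is essentially identical to the paper's own argument: the paper likewise notes that $fg=0$ forces $a_ig=0$ (via Proposition \ref{prop3.2} and the fact that constants multiply coefficientwise), applies this to $f(gh)=0$ to get $a_i(gh)=0$, regroups as $(a_ig)h=0$, and invokes Proposition \ref{prop3.2} a second time. Your write-up simply makes explicit the coefficientwise-multiplication point that the paper leaves implicit.
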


\begin{proof}Let $f=\sum_{i=0}^na_ix^i$, $g=\sum_{j=0}^mb_jx^j$ and $h=\sum_{k=0}^pc_kx^k$ $\in R[x;\sigma,\delta]$. Note that if $fg=$ then $a_ig=0$ for all $i$. Suppose that $fgh=0$ then $a_i(gh)=0$ for all $i$, so $(a_ig)h=0$ for all $i$. Then by Proposition \ref{prop3.2}, we have $a_ib_jc_k=0$ for all $i,j,k$.
\end{proof}

\begin{theo}\label{oretheo}Let $R$ be an $(\sigma,\delta)$-skew Armendariz ring satisfying the condition $(\mathcal{C_{\sigma}})$. Then
\par$(1)$ $R$ is reversible if and only if $R[x;\sigma,\delta]$ is reversible;
\par$(2)$ $R$ is symmetric if and only if $R[x;\sigma,\delta]$ is symmetric.
\end{theo}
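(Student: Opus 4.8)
The plan is to prove each equivalence by treating the two implications separately. The backward (``if'') directions are immediate from the embedding of $R$ into $R[x;\sigma,\delta]$ as the constant (degree-zero) polynomials: on this subring the Ore multiplication reduces to $a\cdot b=ab$ since there is no $x$ to commute past, so the copy of $R$ is a genuine subring, and reversibility/symmetry are subring-closed (their defining conditions are universally quantified equations). Hence any witness $ab=0$ or $abc=0$ in $R$ is the same relation among constants, and reversibility (resp.\ symmetry) of $R[x;\sigma,\delta]$ descends to $R$. The content is in the two forward directions, which I would handle with the coefficient-reduction results already in hand: Proposition \ref{prop3.2}, Lemma \ref{lem4}, and especially Lemma \ref{lem3}.

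For the forward direction of $(1)$, suppose $R$ is reversible and take $f=\sum_i a_ix^i$, $g=\sum_j b_jx^j$ with $fg=0$. Proposition \ref{prop3.2} gives $a_ib_j=0$ for all $i,j$, and reversibility of $R$ yields $b_ja_i=0$. To prove $gf=0$ I expand $gf=\sum_{i,j}b_jx^ja_ix^i$ using $x^ja_i=\sum_{\ell\le j}f_\ell^j(a_i)x^\ell$, so the $(i,j)$-term becomes $\sum_\ell b_jf_\ell^j(a_i)x^{\ell+i}$. Since $b_ja_i=0$, Lemma \ref{lem3} (with $a=b_j$, $b=a_i$) forces $b_jf_\ell^j(a_i)=0$ for every admissible $\ell\le j$, so each term vanishes and $gf=0$.

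For the forward direction of $(2)$, $R$ is symmetric, hence reversible, so Lemma \ref{lem3} applies. Take $f,g,h$ with $fgh=0$; Lemma \ref{lem4} gives $a_ib_jc_k=0$, and symmetry rewrites this as $a_ic_kb_j=0$. I must show $fhg=0$, so I expand $a_ix^ic_kx^kb_jx^j$ via $x^ic_k=\sum_s f_s^i(c_k)x^s$ and then $x^{s+k}b_j=\sum_t f_t^{s+k}(b_j)x^t$, producing the coefficients $a_if_s^i(c_k)f_t^{s+k}(b_j)$. The key claim is that each of these vanishes, and I establish it by peeling off the $f$-maps from the outside in: from $(a_ic_k)b_j=0$ Lemma \ref{lem3} gives $a_ic_k\,f_t^{s+k}(b_j)=0$; setting $b_j'=f_t^{s+k}(b_j)$ and applying symmetry gives $a_ib_j'c_k=0$; then Lemma \ref{lem3} applied to $(a_ib_j')c_k=0$ gives $a_ib_j'\,f_s^i(c_k)=0$; and a final symmetry swap of the last two factors yields $a_i f_s^i(c_k)f_t^{s+k}(b_j)=0$, as required.

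I expect the last coefficient computation to be the main obstacle. The difficulty is that the $f$-maps are nested rather than independent: the outer exponent $s+k$ depends on the inner summation index $s$, so one cannot apply Lemma \ref{lem3} in a single stroke. The resolution is that Lemma \ref{lem3} only requires the product of \emph{all} preceding factors (itself a single element of $R$) times one trailing factor to be zero, and interleaving these applications with symmetry swaps lets me repeatedly bring a freshly transformed element into trailing position before acting on it by an $f$-map. The index bookkeeping ($s\le i$ and $t\le s+k$) matches the constraint $i\le j$ in Lemma \ref{lem3}, so every invocation is legitimate, and summing the vanishing terms over $i,j,k$ gives $fhg=0$.
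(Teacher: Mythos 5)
Your proof is correct, and for the backward directions and part (1) it coincides with the paper's argument (subring closure, then Proposition \ref{prop3.2} together with Lemma \ref{lem3}). The one place you genuinely diverge is part (2): you expand the triple product as $(fh)g$, so that $x^i$ acts on $c_k$ and then $x^{s+k}$ acts on $b_j$, giving the flat coefficients $a_if_s^i(c_k)f_t^{s+k}(b_j)$ with $s\le i$, $t\le s+k$; the paper instead groups as $f(hg)$, so $x^k$ acts on $b_j$ first and $x^i$ then acts on the single element $c_kf_t^k(b_j)$, giving the nested coefficients $a_if_s^i(c_kf_t^k(b_j))$ with $s\le i$, $t\le k$. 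The nested form dies under two consecutive applications of Lemma \ref{lem3} --- from $(a_ic_k)b_j=0$ one gets $a_i\cdot\bigl(c_kf_t^k(b_j)\bigr)=0$, and the lemma applied again to that pair finishes --- with no symmetry swap needed in between, which is why the paper's write-up is shorter. Your flat form cannot be handled in one stroke (as you correctly observe, the outer exponent $s+k$ depends on $s$), and your four-step interleaving of Lemma \ref{lem3} with symmetry swaps is a valid, if slightly longer, way around this; each step checks out, including the index constraints and the fact that symmetry plus unity gives the reversibility that Lemma \ref{lem3} requires. Both routes rest on the same ingredients (Proposition \ref{prop3.2}, Lemma \ref{lem4}, Lemma \ref{lem3}), so the difference is one of bookkeeping rather than substance; the lesson worth taking from the comparison is that grouping the product as $f(hg)$ makes the coefficients land directly in the shape that Lemma \ref{lem3} handles.
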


\begin{proof}Note that any subring of symmetric (respectively,  reversible) ring is again symmetric (respectively,  reversible). Conversely, let $f=\sum_{i=0}^na_ix^i,g=\sum_{j=0}^mb_jx^j$ and $h=\sum_{k=0}^pc_kx^k\in R[x;\sigma,\delta]$.
\NL(1) If $fg=0$ then $a_ib_j=0$ for all $i,j$ (by Proposition \ref{prop3.2}). Since $R$ is reversible, we have $b_ja_i=0$ for all $i,j$, Lemma \ref{lem3} implies that $b_jf_k^{\ell}(a_i)=0$ for all $i,j$. Thus $gf=\sum_{i=0}^n\sum_{j=0}^m\sum_{k=0}^jb_jf_k^j(a_i)x^{i+k}=0$.
\NL(2) If $fgh=0$ then $a_ib_jc_k=0$ for all $i,j,k$, by Lemma \ref{lem4}. Since $R$ is symmetric, $a_ic_kb_j=0$ for all $i,j,k$. Also, $R$ is reversible then by Lemma \ref{lem3}, we have $a_if_{s}^i(c_kf_{t}^k(b_j))=0$ for all $i,j,k,s,t$ with $(s\leq i,\,t\leq k)$. Thus $fhg=0$.
\end{proof}

According to Hong et al. \cite[Theorem 1.8]{hong/2006}, if $R$ is $\sigma$-Armendariz then $R$ is $\sigma$-skew Armendariz. But the converse is note true by \cite[Example 1.9]{hong/2006}.

\begin{theo}\label{sig/arm sk/arm}$R$ is $\sigma$-Armendariz if and only if $R$ is $\sigma$-skew Armendariz and satisfies the condition  $(\mathcal{C}_{\sigma})$.
\end{theo}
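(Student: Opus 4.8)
The plan is to prove the biconditional in Theorem~\ref{sig/arm sk/arm} by establishing the two implications separately, treating this as the specialization $\delta=0$ of the machinery already developed. Recall that with $\delta=0$ the maps $f_i^j$ collapse to $f_i^i=\sigma^i$ and $f_j^i=0$ for $j<i$, so the skew multiplication reduces to $xa=\sigma(a)x$ and products of monomials satisfy $a_ix^i\cdot b_jx^j=a_i\sigma^i(b_j)x^{i+j}$. I would state this reduction explicitly at the outset, since both directions hinge on comparing the coefficient condition $a_ib_j=0$ (the $\sigma$-Armendariz condition) with the skew coefficient condition $a_i\sigma^i(b_j)=0$ (the $\sigma$-skew Armendariz condition), and the bridge between them is precisely the condition $(\mathcal{C}_\sigma)$.

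For the backward direction, assume $R$ is $\sigma$-skew Armendariz and satisfies $(\mathcal{C}_\sigma)$. This is essentially the content already extracted in Proposition~\ref{prop3.2}(i) with $\delta=0$: given $f=\sum a_ix^i$ and $g=\sum b_jx^j$ in $R[x;\sigma]$ with $fg=0$, the $\sigma$-skew Armendariz hypothesis yields $a_ix^ib_jx^j=0$, hence $a_i\sigma^i(b_j)x^{i+j}=0$, so $a_i\sigma^i(b_j)=0$ for all $i,j$. Applying $(\mathcal{C}_\sigma)$ with $a=a_i$ and $b=\sigma^{i-1}(b_j)$, then peeling off one $\sigma$ at a time, or more directly invoking $(\mathcal{C}_\sigma)$ in the form ``$a\sigma(b)=0 \Rightarrow ab=0$'' iteratively, converts $a_i\sigma^i(b_j)=0$ into $a_ib_j=0$. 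The cleanest route is simply to cite the remark immediately following Proposition~\ref{prop3.2}, which already asserts this equivalence, so this half is a short deduction from results in hand.

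For the forward direction, assume $R$ is $\sigma$-Armendariz. I must show two things: that $R$ is $\sigma$-skew Armendariz, and that $R$ satisfies $(\mathcal{C}_\sigma)$. The first is quoted in the excerpt as \cite[Theorem~1.8]{hong/2006}, so I would cite it. The genuinely new work is verifying $(\mathcal{C}_\sigma)$: suppose $a\sigma(b)=0$ for $a,b\in R$; I want $ab=0$. The trick is to manufacture a vanishing product in $R[x;\sigma]$ whose $\sigma$-Armendariz consequence forces $ab=0$. I would set $f=ax$ and $g=b$, so that $fg=ax\,b=a\sigma(b)x=0$; then $\sigma$-Armendariz gives $ab=0$ directly from the coefficients $a$ (of $f$) and $b$ (of $g$). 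This is the main obstacle in the sense that it is the only step not already packaged as a citation, but the construction $f=ax,\ g=b$ makes it immediate.

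The one point requiring care is the indexing in the $\sigma$-Armendariz conclusion: for $f=ax=0\cdot x^0+a\cdot x^1$ and $g=b=b\cdot x^0$, the coefficient pairs are $(a_1,b_0)=(a,b)$, and the definition delivers $a_ib_j=0$ for \emph{all} $i,j$, in particular $a_1b_0=ab=0$, as wanted. I expect no hidden difficulty beyond keeping this bookkeeping straight, since the heavy lifting — the equivalence of the coefficient conditions under $(\mathcal{C}_\sigma)$ and the Hong et al.\ implication — is already available from the preceding material.
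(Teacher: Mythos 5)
Your proof is correct and follows essentially the same route as the paper's: your backward direction ($\sigma$-skew Armendariz gives $a_i\sigma^i(b_j)=0$, then $(\mathcal{C}_\sigma)$ applied repeatedly gives $a_ib_j=0$) is exactly the paper's argument, and your forward direction likewise invokes \cite[Theorem 1.8]{hong/2006} for the $\sigma$-skew Armendariz part. The only divergence is that where the paper simply cites \cite[Proposition 1.3(ii)]{hong/2006} for the fact that $\sigma$-Armendariz rings satisfy $(\mathcal{C}_\sigma)$, you prove it directly via $f=ax$, $g=b$, so that $fg=a\sigma(b)x=0$ forces $ab=0$ --- a correct, self-contained replacement for that citation.
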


\begin{proof}$(\Rightarrow)$. By \cite[Proposition 1.3(ii) and Theorem 1.8]{hong/2006}. $(\Leftarrow)$. Let $f=a_0+a_1x+\cdots+a_nx^n$ and $g=b_0+b_1x+\cdots+b_mx^m$ $\in R[x;\sigma]$, such that $fg=0$, since $R$ is $\sigma$-skew Armendariz then $a_i\sigma^i(b_j)=0$ for all $ i,j$. Since $R$ satisfies the condition $(\mathcal{C}_{\sigma})$ then $a_ib_j=0$ for all $i,j$. Therefore $R$ is $\sigma$-Armendariz.
\end{proof}

\smallskip

We clearly obtain the following corollaries of Theorem \ref{oretheo}.

\begin{cor}[{\cite[Theorem 3.6]{hong/2006}}]\label{cor2}Let $R$ be an $\sigma$-Armendariz ring. Then
\par$(1)$ $R$ is reversible if and only if $R[x;\sigma]$ is reversible.
\par$(2)$ $R$ is symmetric if and only if $R[x;\sigma]$ is symmetric.
\end{cor}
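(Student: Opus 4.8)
The plan is to deduce this corollary directly from Theorem~\ref{oretheo} by specializing to the derivation-free case $\delta=0$. In that case the Ore extension $R[x;\sigma,\delta]$ collapses to the skew polynomial ring $R[x;\sigma]$, since the defining relation $xa=\sigma(a)x+\delta(a)$ becomes $xa=\sigma(a)x$. So I would first record the identification of the relevant Armendariz notions when $\delta=0$: for $p=\sum_i a_ix^i$ and $q=\sum_j b_jx^j$ in $R[x;\sigma]$, the monomial product is $a_ix^ib_jx^j=a_i\sigma^i(b_j)x^{i+j}$, whence $a_ix^ib_jx^j=0$ if and only if $a_i\sigma^i(b_j)=0$. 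Comparing with the two definitions, this shows that ``$(\sigma,0)$-skew Armendariz'' and ``$\sigma$-skew Armendariz'' name exactly the same condition.

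With this identification in hand, the second step is to convert the hypothesis. By Theorem~\ref{sig/arm sk/arm}, a $\sigma$-Armendariz ring $R$ is $\sigma$-skew Armendariz and satisfies the condition $(\mathcal{C}_{\sigma})$. By the previous paragraph, being $\sigma$-skew Armendariz is the same as being $(\sigma,0)$-skew Armendariz. Hence $R$, equipped with the zero derivation, satisfies precisely the standing hypotheses of Theorem~\ref{oretheo}: it is $(\sigma,\delta)$-skew Armendariz with $\delta=0$ and it satisfies $(\mathcal{C}_{\sigma})$.

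Finally, I would apply Theorem~\ref{oretheo} with $\delta=0$. Part $(1)$ of that theorem gives that $R$ is reversible if and only if $R[x;\sigma,0]=R[x;\sigma]$ is reversible, which is statement $(1)$; part $(2)$ gives the symmetric version, which is statement $(2)$. I do not anticipate any genuine obstacle here, since the whole content of the corollary is already packaged in Theorem~\ref{oretheo}. The only point requiring care is the bookkeeping in the first step, namely verifying from the definitions that the $\delta=0$ specialization of $(\sigma,\delta)$-skew Armendariz really coincides with $\sigma$-skew Armendariz, and that the $\sigma$-Armendariz hypothesis indeed supplies both ingredients, $\sigma$-skew Armendariz together with $(\mathcal{C}_{\sigma})$, needed to invoke it.
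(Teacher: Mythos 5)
Your proposal is correct and matches the paper's own route: the paper derives this corollary directly from Theorem~\ref{oretheo} (with $\delta=0$), using Theorem~\ref{sig/arm sk/arm} to convert the $\sigma$-Armendariz hypothesis into ``$\sigma$-skew Armendariz plus $(\mathcal{C}_{\sigma})$.'' Your careful check that $(\sigma,0)$-skew Armendariz coincides with $\sigma$-skew Armendariz is exactly the bookkeeping the paper leaves implicit when it calls the corollary a clear consequence of Theorem~\ref{oretheo}.
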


\begin{cor}[{\cite[Proposition 3.4]{huh} and \cite[Proposition 2.4]{kim}}]Let $R$ be an Armendariz ring. Then
\par$(1)$ $R$ is reversible if and only if $R[x]$ is reversible.
\par$(2)$ $R$ is symmetric if and only if $R[x]$ is symmetric.
\end{cor}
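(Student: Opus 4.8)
The plan is to obtain this corollary as the special case $\sigma=\mathrm{id}_R$ and $\delta=0$ of Theorem \ref{oretheo}. First I would observe that under these choices the defining relation $xa=\sigma(a)x+\delta(a)$ collapses to $xa=ax$, so that $x$ is central and the Ore extension $R[x;\mathrm{id}_R,0]$ is nothing but the ordinary polynomial ring $R[x]$. Thus the statements about $R[x]$ in the corollary are statements about $R[x;\sigma,\delta]$ for this particular $(\sigma,\delta)$.

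Next I would check that both hypotheses of Theorem \ref{oretheo} become automatic here once $R$ is assumed Armendariz. The condition $(\mathcal{C}_{\mathrm{id}_R})$ requires $a\sigma(b)=0\Rightarrow ab=0$; since $\sigma(b)=b$, this reads $ab=0\Rightarrow ab=0$ and holds trivially. For the $(\mathrm{id}_R,0)$-skew Armendariz property, centrality of $x$ gives $a_ix^ib_jx^j=a_ib_jx^{i+j}$, so the requirement ``$pq=0$ implies $a_ix^ib_jx^j=0$ for all $i,j$'' is equivalent to ``$pq=0$ implies $a_ib_j=0$ for all $i,j$'', which is precisely the classical Armendariz condition. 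Hence every Armendariz ring $R$, equipped with $\sigma=\mathrm{id}_R$ and $\delta=0$, satisfies the hypotheses of Theorem \ref{oretheo}.

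Finally I would invoke Theorem \ref{oretheo} directly: parts $(1)$ and $(2)$ yield respectively that $R$ is reversible if and only if $R[x]$ is reversible, and that $R$ is symmetric if and only if $R[x]$ is symmetric, which is exactly the assertion of the corollary. I expect no genuine obstacle in this argument; the only point that needs care is the reduction of the $(\sigma,\delta)$-skew Armendariz condition to the ordinary Armendariz condition, which rests entirely on the centrality of $x$ when $\sigma=\mathrm{id}_R$ and $\delta=0$.
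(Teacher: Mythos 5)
Your proof is correct and takes essentially the same route as the paper: the paper presents this corollary as an immediate consequence of Theorem \ref{oretheo}, and your specialization $\sigma=\mathrm{id}_R$, $\delta=0$ (under which $(\mathcal{C_{\sigma}})$ holds trivially and the $(\sigma,\delta)$-skew Armendariz condition reduces, via centrality of $x$, to the classical Armendariz condition) is exactly the verification the paper leaves implicit when it says the corollaries are ``clearly obtained.''
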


\section{$\sigma$-Reversibility and $\sigma$-Symmetry of Ore Extensions}

\smallskip

In the next Lemma we give a relationship between $\sigma$-reversibility (respectively,  $\sigma$-symmetry) and reversibility (respectively,  symmetry).

\smallskip

\begin{lem}\label{lem5}Let $R$ be a ring and $\sigma$ an endomorphism of $R$. If $R$ satisfies the condition $(\mathcal{C_{\sigma}})$. Then
\par $(1)$ $R$ is reversible if and only if $R$ is $\sigma$-reversible;
\par $(2)$ $R$ is symmetric if and only if $R$ is $\sigma$-symmetric.
\end{lem}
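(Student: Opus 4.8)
The plan is to prove both biconditionals by a single underlying mechanism. The point is that the hypothesis $(\mathcal{C_{\sigma}})$ lets one \emph{remove} a $\sigma$ from a vanishing product (it gives $a\sigma(b)=0\Rightarrow ab=0$), while reversibility lets one \emph{reorder} the two factors of a vanishing product; by combining the two one can effectively \emph{insert} a $\sigma$ on the correct factor. With this in mind, the two directions that assert ``$\sigma$-version $\Rightarrow$ plain version'' are immediate, and only the reverse implications require work.

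For (1), the easy direction ``$\sigma$-reversible $\Rightarrow$ reversible'' goes as follows: if $ab=0$ then right $\sigma$-reversibility gives $b\sigma(a)=0$, and reading this as a product $b\,\sigma(a)=0$ and invoking $(\mathcal{C_{\sigma}})$ yields $ba=0$. For the converse ``reversible $\Rightarrow$ $\sigma$-reversible'' I would start from $ab=0$, apply the endomorphism $\sigma$ to obtain $\sigma(a)\sigma(b)=0$, view this as $\sigma(a)\,\sigma(b)=0$ and apply $(\mathcal{C_{\sigma}})$ to get $\sigma(a)b=0$, and finally flip by reversibility to reach $b\sigma(a)=0$. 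Part (2) is entirely parallel: from $abc=0$, right $\sigma$-symmetry gives $ac\sigma(b)=0$, and $(\mathcal{C_{\sigma}})$ applied to the product $(ac)\sigma(b)$ returns $acb=0$, proving symmetry; conversely, given symmetry one first gets $acb=0$, then reverses $(ac)b=0$ to $b(ac)=0$ (legitimate since symmetric rings are reversible, as noted in the introduction), applies $\sigma$ to get $\sigma(b)\sigma(ac)=0$, strips one $\sigma$ via $(\mathcal{C_{\sigma}})$ to get $\sigma(b)(ac)=0$, and reverses once more to conclude $ac\sigma(b)=0$.

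The step I expect to be the main obstacle is exactly these forward implications (plain $\Rightarrow$ $\sigma$-version), because $(\mathcal{C_{\sigma}})$ by itself never produces a factor of the form $\sigma(\cdot)$; it only deletes one. The observation that unlocks the argument is that one is allowed to apply $\sigma$ to the \emph{whole} vanishing product and then use $(\mathcal{C_{\sigma}})$ to peel the $\sigma$ off a single factor, with reversibility used before and after to position the surviving $\sigma$ correctly. It is clean to isolate this as an auxiliary fact valid in any reversible ring satisfying $(\mathcal{C_{\sigma}})$, namely $uv=0\Rightarrow u\sigma(v)=0$ (proved by $uv=0\Rightarrow vu=0\Rightarrow \sigma(v)\sigma(u)=0\Rightarrow \sigma(v)u=0\Rightarrow u\sigma(v)=0$), after which both nontrivial directions follow by specializing $u,v$ to the appropriately grouped factors ($u=b,\ v=a$ in (1); $u=ac,\ v=b$ in (2)). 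Finally, I would remark that the left-handed versions of $\sigma$-reversibility and $\sigma$-symmetry are obtained by the mirror-image argument, so no separate treatment is needed.
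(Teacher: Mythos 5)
Your proposal is correct and takes essentially the same route as the paper: both rest on the observation that applying $\sigma$ to a whole vanishing product, stripping one $\sigma$ via $(\mathcal{C_{\sigma}})$, and repositioning factors by reversibility yields the $\sigma$-versions, while the converse directions use $(\mathcal{C_{\sigma}})$ directly. The differences are purely presentational --- the paper argues the forward direction of (1) by contradiction and routes (2) through the right $\sigma$-reversibility established in (1), whereas you argue directly and isolate the reusable fact $uv=0\Rightarrow u\sigma(v)=0$.
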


\begin{proof}$(1)$ Let $a,b\in R$, $ab=0$ implies $b\sigma(a)=0$, with the condition $(\mathcal{C_{\sigma}})$, we have $ba=0$. So $R$ is reversible. Conversely, let $a,b\in R$, suppose that $ab=0$. If $b\sigma(a)\neq 0$ (i.e., $R$ is not right $\sigma$-reversible), the reversibility of $R$ gives $\sigma(a)b\neq 0$. Also, $R$ satisfies $(\mathcal{C_{\sigma}})$, then $\sigma(ab)\neq 0$. Contradiction. Now, if $\sigma(b)a\neq 0$ (i.e., $R$ is not left $\sigma$-reversible), the condition $(\mathcal{C_{\sigma}})$ gives $\sigma(ba)\neq 0$. Contradiction, because $R$ is reversible.\NL$(2)$ Let $a,b,c\in R$ such that $abc=0$. We have $bca=0$ (by reversibility), so $ca\sigma(b)=0$ (by right $\sigma$-reversibility), then $\sigma(b)ca=0$ (by reversibility), and so $\sigma(b)ac=0$. So we have the left $\sigma$-symmetry. With the same method, we obtain the right $\sigma$-symmetry. Conversely, if $abc=0$, by right $\sigma$-symmetry we have $ac\sigma(b)=0$, then $acb=0$ by the condition $(\mathcal{C_{\sigma}})$.
\end{proof}

\begin{ex}\label{ex2.5}Let $\Z_2$ is the ring of integers modulo $2$, take $R=\Z_2\oplus\Z_2$ with the usual addition and multiplication. $R$ is commutative, and so $R$ is symmetric $($so reversible$)$. Now, consider $\sigma\colon R\rightarrow R$ defined by $\sigma((a,b))=(b,a)$. Then, we have:
\NL$(i)$ $R$ is not $\sigma$-reversible $($so not $\sigma$-symmetric$)$: For $a=(1,0)$ and $b=(0,1)$, we have $ab=0$, but $b\sigma(a)=b^2=b\neq 0$.\NL$(ii)$ $R$ does not satisfy the condition $(\mathcal{C_{\sigma}})$: $a\sigma(a)=0$, but $a^2=a\neq 0$.
\end{ex}

By Example \ref{ex2.5}, we see that ``the condition $(\mathcal{C_{\sigma}})$'' in Lemma \ref{lem5}, is not superfluous.

\begin{theo}\label{s rev theo}Let $R$ be an $(\sigma,\delta)$-skew Armendariz ring satisfying the condition $(\mathcal{C_{\sigma}})$. The following statements are equivalent:
\par$(1)$ $R$ is reversible;
\par$(2)$ $R$ is $\sigma$-reversible;
\par$(3)$ $R$ is right $\sigma$-reversible;
\par$(4)$ $R[x;\sigma,\delta]$ is reversible.
\end{theo}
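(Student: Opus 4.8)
The plan is to prove the four statements equivalent by establishing the cycle $(1)\Rightarrow(2)\Rightarrow(3)\Rightarrow(1)$ together with the separate equivalence $(1)\Leftrightarrow(4)$, drawing almost entirely on results already available in the paper. The first observation is that both standing hypotheses --- that $R$ is $(\sigma,\delta)$-skew Armendariz and that it satisfies $(\mathcal{C_{\sigma}})$ --- are exactly what is needed to invoke Lemma \ref{lem5}(1) and Theorem \ref{oretheo}(1) at once, so most of the work is packaged into those statements.

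For $(1)\Rightarrow(2)$ I would cite Lemma \ref{lem5}(1) directly: under $(\mathcal{C_{\sigma}})$, reversibility and $\sigma$-reversibility coincide, so a reversible $R$ is $\sigma$-reversible, in particular both right and left $\sigma$-reversible. The implication $(2)\Rightarrow(3)$ is then immediate from the definitions, since a $\sigma$-reversible ring is by definition right $\sigma$-reversible.

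The one implication carrying genuine content is $(3)\Rightarrow(1)$, and this is where I would concentrate. Suppose $R$ is right $\sigma$-reversible and let $a,b\in R$ with $ab=0$. Right $\sigma$-reversibility gives $b\sigma(a)=0$, and then the condition $(\mathcal{C_{\sigma}})$, applied to the pair $b,a$, yields $ba=0$. Hence $R$ is reversible, which closes the cycle and shows that $(1)$, $(2)$ and $(3)$ are equivalent. It is worth noting that this argument uses only right $\sigma$-reversibility, so it already subsumes the corresponding half of Lemma \ref{lem5}(1).

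Finally, $(1)\Leftrightarrow(4)$ is precisely Theorem \ref{oretheo}(1), which applies because $R$ is $(\sigma,\delta)$-skew Armendariz and satisfies $(\mathcal{C_{\sigma}})$. I expect no serious obstacle here: the coefficient control supplied by Proposition \ref{prop3.2} and Lemmas \ref{lem3} and \ref{lem4} is already absorbed into Theorem \ref{oretheo} and Lemma \ref{lem5}. The only point demanding any care --- and the true crux of the statement --- is recognizing that the a priori weakest hypothesis, right $\sigma$-reversibility, is forced all the way up to full reversibility by the condition $(\mathcal{C_{\sigma}})$; everything else reduces to a citation.
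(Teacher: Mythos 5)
Your proposal is correct and follows essentially the same route as the paper: the equivalence $(1)\Leftrightarrow(4)$ is cited from Theorem \ref{oretheo}, the implications $(1)\Rightarrow(2)\Rightarrow(3)$ come from Lemma \ref{lem5} and the definition, and $(3)\Rightarrow(1)$ is proved by exactly the paper's argument ($ab=0$ gives $b\sigma(a)=0$ by right $\sigma$-reversibility, hence $ba=0$ by $(\mathcal{C_{\sigma}})$). No gaps; nothing further is needed.
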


\begin{proof}$(1)\Leftrightarrow (4)$. By Theorem \ref{oretheo}.
\NL$(1)\Rightarrow (2)$ and $(2)\Rightarrow (3)$. Immediately from Lemma \ref{lem5}.
\NL$(3)\Rightarrow (1)$. Let $a,b\in R$, if $ab=0$ then $b\sigma(a)=0$ (right $\sigma$-reversibility), so $ba=0$ (condition $(\mathcal{C_{\sigma}})$).
\end{proof}

\begin{theo}\label{s sym theo}Let $R$ be an $(\sigma,\delta)$-skew Armendariz ring satisfying the condition $(\mathcal{C_{\sigma}})$. The following statements are equivalent:
\par$(1)$ $R$ is symmetric;
\par$(2)$ $R$ is $\sigma$-symmetric;
\par$(3)$ $R$ is right $\sigma$-symmetric;
\par$(4)$ $R[x;\sigma,\delta]$ is symmetric.
\end{theo}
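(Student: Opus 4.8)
The plan is to prove the four statements equivalent by mirroring the argument already used for Theorem \ref{s rev theo}, since every ingredient needed for the symmetric case has been prepared in advance. I would organize the proof as the cycle $(1)\Leftrightarrow(4)$ together with $(1)\Rightarrow(2)\Rightarrow(3)\Rightarrow(1)$, so that closing the loop yields all the equivalences simultaneously.

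First, for $(1)\Leftrightarrow(4)$ I would simply invoke Theorem \ref{oretheo}(2), which already states that $R$ is symmetric if and only if $R[x;\sigma,\delta]$ is symmetric; under the standing hypotheses (namely $R$ is $(\sigma,\delta)$-skew Armendariz and satisfies $(\mathcal{C_{\sigma}})$) this is immediate and needs no further work. Next, the implication $(1)\Rightarrow(2)$ is the forward direction of Lemma \ref{lem5}(2), which establishes that a ring satisfying $(\mathcal{C_{\sigma}})$ is symmetric if and only if it is $\sigma$-symmetric; since $R$ satisfies $(\mathcal{C_{\sigma}})$ by hypothesis, symmetry gives $\sigma$-symmetry at once. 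The step $(2)\Rightarrow(3)$ is trivial, as $\sigma$-symmetry means by definition both left and right $\sigma$-symmetry, so right $\sigma$-symmetry follows by discarding the left half.

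It then remains to prove $(3)\Rightarrow(1)$ directly, exactly as in the converse argument recorded at the end of the proof of Lemma \ref{lem5}(2). Suppose $R$ is right $\sigma$-symmetric and let $a,b,c\in R$ with $abc=0$. Right $\sigma$-symmetry gives $ac\sigma(b)=0$, and then the condition $(\mathcal{C_{\sigma}})$, applied to the pair $ac$ and $b$, yields $acb=0$. Hence $R$ is symmetric, which closes the cycle and establishes the equivalence of $(1)$ through $(4)$.

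I do not expect a genuine obstacle here, since the theorem is essentially an assembly of results proved earlier; the only point deserving care is verifying that the hypotheses of each cited result are actually met. In particular, the equivalence $(1)\Leftrightarrow(4)$ rests on Theorem \ref{oretheo}(2), whose proof in turn uses Lemma \ref{lem4} (the three-polynomial skew-Armendariz property) together with the reversibility of $R$ implied by symmetry, via Lemma \ref{lem3}; so one should confirm that the $(\sigma,\delta)$-skew Armendariz hypothesis plus $(\mathcal{C_{\sigma}})$ is precisely what that theorem demands. The role of $(\mathcal{C_{\sigma}})$ is the crucial converse ingredient in the step $(3)\Rightarrow(1)$, and Example \ref{ex2.5} already shows it cannot be dropped.
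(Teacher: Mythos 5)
Your proposal is correct and follows exactly the route the paper intends: the paper's proof of this theorem is literally ``As of Theorem \ref{s rev theo}'', i.e., the cycle $(1)\Leftrightarrow(4)$ via Theorem \ref{oretheo}, $(1)\Rightarrow(2)\Rightarrow(3)$ via Lemma \ref{lem5}(2), and the direct argument $(3)\Rightarrow(1)$ using right $\sigma$-symmetry plus $(\mathcal{C_{\sigma}})$, which is precisely your assembly.
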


\begin{proof}As of Theorem \ref{s rev theo}.
\end{proof}

The next corollaries are direct consequences of Theorems \ref{sig/arm sk/arm}, \ref{s rev theo} and \ref{s sym theo}.

\begin{cor}[{\cite[Corollary 2.11]{baser/2007}}]Let $R$ be an $\sigma$-Armendariz ring. The following statements are equivalent:
\par$(1)$ $R$ is reversible;
\par$(2)$ $R$ is $\sigma$-reversible;
\par$(3)$ $R$ is right $\sigma$-reversible;
\par$(4)$ $R[x;\sigma]$ is reversible.
\end{cor}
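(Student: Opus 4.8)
The plan is to obtain this corollary as an immediate specialization of Theorem \ref{s rev theo}, rather than to argue anything from scratch. The linchpin is Theorem \ref{sig/arm sk/arm}, which tells us that a $\sigma$-Armendariz ring is exactly a $\sigma$-skew Armendariz ring satisfying the condition $(\mathcal{C}_{\sigma})$. So the first step is to record that, under the hypothesis that $R$ is $\sigma$-Armendariz, $R$ automatically satisfies $(\mathcal{C}_{\sigma})$ and is $\sigma$-skew Armendariz. Both ingredients needed downstream are thereby supplied for free.

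Next I would observe that the skew polynomial ring $R[x;\sigma]$ is nothing but the Ore extension $R[x;\sigma,\delta]$ in the degenerate case $\delta=0$, so that the whole framework of Section 2 applies once we check the two Armendariz notions coincide there. When $\delta=0$ the multiplication rule degenerates to $x^i b = \sigma^i(b)x^i$, hence $a_ix^ib_jx^j = a_i\sigma^i(b_j)x^{i+j}$, and the requirement $a_ix^ib_jx^j=0$ for all $i,j$ is literally the same as $a_i\sigma^i(b_j)=0$ for all $i,j$. Thus ``$\sigma$-skew Armendariz'' is precisely ``$(\sigma,0)$-skew Armendariz'', and $R$ meets both hypotheses of Theorem \ref{s rev theo} with $\delta=0$.

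With this bookkeeping in place, the proof is a single invocation: setting $\delta=0$ in Theorem \ref{s rev theo} turns $R[x;\sigma,\delta]$ into $R[x;\sigma]$ and yields the equivalence of ``$R$ is reversible'', ``$R$ is $\sigma$-reversible'', ``$R$ is right $\sigma$-reversible'', and ``$R[x;\sigma]$ is reversible'' verbatim, which are statements $(1)$--$(4)$. I expect no genuine obstacle here; the only point requiring care is the elementary verification that the two skew-Armendariz definitions agree when $\delta=0$, and that the specialization $\delta=0$ is legitimate in Theorem \ref{s rev theo}. The entire content of the corollary is already absorbed into Theorems \ref{sig/arm sk/arm} and \ref{s rev theo}, so this is a clean corollary rather than a result demanding new argument.
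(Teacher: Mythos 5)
Your proposal is correct and is exactly the paper's route: the paper states this corollary as a direct consequence of Theorem \ref{sig/arm sk/arm} (identifying $\sigma$-Armendariz with $\sigma$-skew Armendariz plus $(\mathcal{C}_{\sigma})$) together with Theorem \ref{s rev theo} specialized to $\delta=0$. Your explicit check that ``$\sigma$-skew Armendariz'' coincides with ``$(\sigma,0)$-skew Armendariz'' is a detail the paper leaves implicit, but it is the right and only bookkeeping needed.
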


\begin{cor}[{\cite[Theorem 2.10]{kwak}}]Let $R$ be an $\sigma$-Armendariz ring. The following statements are equivalent:
\par$(1)$ $R$ is symmetric;
\par$(2)$ $R$ is $\sigma$-symmetric;
\par$(3)$ $R$ is right $\sigma$-symmetric;
\par$(4)$ $R[x;\sigma]$ is symmetric.
\end{cor}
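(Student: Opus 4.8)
The plan is to derive this corollary directly from Theorem \ref{sig/arm sk/arm} together with Theorem \ref{s sym theo}, specializing the latter to the case $\delta=0$. The first step is to translate the hypothesis: since $R$ is $\sigma$-Armendariz, Theorem \ref{sig/arm sk/arm} tells us that $R$ is $\sigma$-skew Armendariz and satisfies the condition $(\mathcal{C_{\sigma}})$. This is precisely the pair of hypotheses needed to invoke the results of the previous section, once we observe that the ordinary skew polynomial ring $R[x;\sigma]$ is the Ore extension $R[x;\sigma,\delta]$ with $\delta=0$.

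Second, I would check that being $\sigma$-skew Armendariz is the same as being $(\sigma,0)$-skew Armendariz. Indeed, for $\delta=0$ the maps $f_{\ell}^i$ all vanish except $f_i^i=\sigma^i$, so that $a_ix^ib_jx^j=a_i\sigma^i(b_j)x^{i+j}$; hence the condition ``$a_ix^ib_jx^j=0$ for all $i,j$'' from the $(\sigma,\delta)$-skew Armendariz definition collapses to ``$a_i\sigma^i(b_j)=0$ for all $i,j$,'' which is exactly the defining condition of a $\sigma$-skew Armendariz ring. This identification is the only point requiring a small computation, and it is essentially the one already performed in the proof of Proposition \ref{prop3.2}(i).

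With these two reductions in hand, the conclusion is immediate: $R$ is an $(\sigma,\delta)$-skew Armendariz ring (with $\delta=0$) satisfying $(\mathcal{C_{\sigma}})$, so Theorem \ref{s sym theo} applies verbatim and yields the equivalence of the four statements, the fourth now reading ``$R[x;\sigma]$ is symmetric.'' I do not expect any genuine obstacle, since the whole content of the corollary has been front-loaded into Theorems \ref{sig/arm sk/arm} and \ref{s sym theo}; the only thing to watch is the bookkeeping matching the $\delta=0$ specialization of the Ore extension to the skew polynomial ring $R[x;\sigma]$ that appears in the statement. The argument parallels the one for the reversibility corollary, replacing Theorem \ref{s sym theo} by Theorem \ref{s rev theo}.
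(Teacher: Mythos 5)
Your proposal is correct and follows exactly the paper's route: the paper states that this corollary is a direct consequence of Theorems \ref{sig/arm sk/arm} and \ref{s sym theo}, which is precisely your reduction. Your extra verification that $\sigma$-skew Armendariz coincides with $(\sigma,0)$-skew Armendariz is a sound piece of bookkeeping that the paper leaves implicit.
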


\section{Related Topics}

In this section we turn our attention to the relationship between the Baerness, quasi-Baerness and p.q.-Baerness of a ring $R$ and these of the Ore extension $R[x;\sigma,\delta]$ in case $R$ is right $\sigma$-reversible and satisfies the condition $(\mathcal{C_{\sigma}})$. For a nonempty subset $X$ of $R$, we write $r_R(X)=\{c\in R|dc=0\;\mathrm{for\;any}\;d\in X\}$ which is called the right annihilator of $X$ in $R$.

\begin{lem}\label{lem4}If $R$ is a right $\sigma$-reversible ring with $\sigma(1)=1$. Then
\par$(1)$ $\sigma(e)=e$ and $\delta(e)=0$ for all idempotent $e\in R$;
\par$(2)$ $R$ is abelian.
\end{lem}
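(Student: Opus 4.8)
The plan is to establish the three assertions in the order $\sigma(e)=e$, then abelianness, then $\delta(e)=0$, because the vanishing of $\delta(e)$ will rely on idempotents being central. Throughout I use only that $R$ is right $\sigma$-reversible, i.e. $ab=0$ implies $b\sigma(a)=0$, together with $\sigma(1)=1$.

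First I would record the two idempotent relations $e(1-e)=0$ and $(1-e)e=0$. Applying right $\sigma$-reversibility to the first gives $(1-e)\sigma(e)=0$, so $\sigma(e)=e\sigma(e)$; applying it to the second gives $e\sigma(1-e)=0$, and since $\sigma(1)=1$ this reads $e(1-\sigma(e))=0$, i.e. $e=e\sigma(e)$. Combining the two identities yields $\sigma(e)=e\sigma(e)=e$, which is the first half of $(1)$.

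Next, for $(2)$, I would fix an idempotent $e$ and an arbitrary $r\in R$ and show that the two ``off-diagonal'' pieces $a=er-ere=er(1-e)$ and $b=re-ere=(1-e)re$ both vanish. Direct computation gives $ae=0$ and $(1-e)a=0$; feeding $(1-e)a=0$ into right $\sigma$-reversibility produces $a\sigma(1-e)=0$, and since $\sigma(e)=e$ is already proved this is $a(1-e)=0$, i.e. $a=ae=0$, so $er=ere$. Symmetrically, $eb=0$ gives $b\sigma(e)=be=0$ by right $\sigma$-reversibility, while $be=b$ by idempotency, forcing $b=0$ and hence $re=ere$. Thus $er=ere=re$ for every $r$, so each idempotent is central and $R$ is abelian.

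Finally, to obtain $\delta(e)=0$ I would differentiate $e=e^2$ with the $\sigma$-derivation rule, using $\sigma(e)=e$, to get $\delta(e)=\sigma(e)\delta(e)+\delta(e)e=e\delta(e)+\delta(e)e$. The main obstacle is that this identity by itself does not force $\delta(e)=0$, and this is exactly where part $(2)$ enters. Since $e$ is central we have $e\delta(e)=\delta(e)e$, so $\delta(e)=2e\delta(e)$; left-multiplying by $e$ and using $e^2=e$ gives $e\delta(e)=2e\delta(e)$, whence $e\delta(e)=0$ and therefore $\delta(e)=2e\delta(e)=0$. Consequently the one delicate point is the ordering of the argument, namely deriving centrality before collapsing the derivation identity; the remaining steps are routine manipulations of the idempotent relations under right $\sigma$-reversibility.
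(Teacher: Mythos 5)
Your proof is correct, and for two of the three assertions it matches the paper's own argument: you obtain $\sigma(e)=e$ exactly as the paper does, from $e(1-e)=(1-e)e=0$ via right $\sigma$-reversibility and $\sigma(1)=1$, and you obtain abelianness the same way the paper does, by killing the two Peirce corners $er(1-e)$ and $(1-e)re$ using right $\sigma$-reversibility together with $\sigma(e)=e$. Where you genuinely diverge is $\delta(e)=0$. The paper proves this inside part $(1)$, \emph{before} abelianness, by manipulating the identity $\delta(e)=\sigma(e)\delta(e)+\delta(e)e=e\delta(e)+\delta(e)e$ directly; as printed that passage is defective (it asserts $(1-e)\delta(e)=e\delta(e)$, while the identity actually gives $(1-e)\delta(e)=\delta(e)e$, and the subsequent steps do not parse as written). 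You instead postpone $\delta(e)=0$ until after part $(2)$, where centrality of $e$ collapses the identity to $\delta(e)=2e\delta(e)$ and then to $\delta(e)=0$ upon left multiplication by $e$. This reordering is legitimate --- your proof of $(2)$ uses only $\sigma(e)=e$, never $\delta(e)=0$, so there is no circularity --- and it buys a clean argument that avoids the garbled step entirely. For completeness, the paper's direct route can also be repaired without centrality: right-multiplying the identity by $e$ gives $e\bigl(\delta(e)e\bigr)=0$, so right $\sigma$-reversibility and $\sigma(e)=e$ give $\delta(e)e=0$; the identity then reads $(1-e)\delta(e)=0$, and one more application of right $\sigma$-reversibility gives $\delta(e)(1-e)=0$, whence $\delta(e)=\delta(e)e=0$.
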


\begin{proof}(1) Let $e$ an idempotent of $R$. We have $e(1-e)=(1-e)e=0$ then $(1-e)\sigma(e)=e\sigma((1-e))=0$, so $\sigma(e)-e\sigma(e)=e-e\sigma(e)=0$, therefore $\sigma(e)=e$. Also, $\delta(e^2)=e\delta(e)+\delta(e)e$ implies $(1-e)\delta(e)=e\delta(e)$, so $(1-e)(1-e)\delta(e)=0$, thus $\delta(e)=e\delta(e)$. On other hand $(1-e)e\delta(e)=e\delta(e)$ implies $e\delta(e)=0$. Therefore $\delta(e)=0$. (2) Let $r\in R$ and $e$ an idempotent of $R$. We have $e(1-e)=0$ then $e(1-e)r=0$, since $R$ is right $\sigma$-reversible then $(1-e)r\sigma(e)=0=(1-e)re=0$, so $re=ere$. Since $(1-e)e=0$, we have also $er=ere$. Then $R$ is abelian.
\end{proof}

Kaplansky \cite{Kaplansky}, introduced the concept of {\it Baer rings} as rings in which the right (left) annihilator of every nonempty subset is generated by an idempotent. According to Clark \cite{clark}, a ring $R$ is called {\it quasi-Baer} if the right annihilator of each right ideal of $R$ is generated (as a right ideal) by an idempotent. It is well-known that these two concepts are left-right symmetric.

Birkenmeier, Kim and Park \cite{birk/pqBaer}, called $R$ a {\it right p.q.-Baer} (principally quasi-Baer) ring if the right annihilator of a principal right ideal of $R$ is generated by an idempotent. Similarly, left p.q.-Baer rings can be defined. $R$ is called a p.q.-Baer ring if it is both right and left p.q.-Baer. The class of p.q.-Baer rings has been extensively investigated by them \cite{birk/pqBaer}. This class includes all biregular rings and all (quasi-) Baer rings.

\begin{theo}\label{theo baer qbaer}Let $R$ be a right $\sigma$-reversible ring which satisfies the condition $(\mathcal{C_{\sigma}})$ with $\sigma(1)=1$. Then
\par$(1)$ If $R$ is a Baer ring  then $R[x;\sigma,\delta]$ is a Baer ring;
\par$(2)$ If $R$ is a quasi-Baer ring  then $R[x;\sigma,\delta]$ is a quasi-Baer ring.
\par$(3)$ If $R$ is a p.q.-Baer ring  then $R[x;\sigma,\delta]$ is a p.q.-Baer ring.
\end{theo}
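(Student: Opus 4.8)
The plan is to reduce all three statements to one annihilator correspondence between $S:=R[x;\sigma,\delta]$ and $R$, exploiting the rigidity of idempotents. First I would record the structural consequences of the hypotheses. Right $\sigma$-reversibility together with $(\mathcal{C}_{\sigma})$ makes $R$ both reversible and $\sigma$-compatible: if $ab=0$ then $b\sigma(a)=0$, whence $ba=0$ by $(\mathcal{C}_{\sigma})$, and applying right $\sigma$-reversibility to $ba=0$ gives $a\sigma(b)=0$; the reverse implication $a\sigma(b)=0\Rightarrow ab=0$ is $(\mathcal{C}_{\sigma})$ itself. By Lemma~\ref{lem4} every idempotent $e\in R$ satisfies $\sigma(e)=e$, $\delta(e)=0$, and is central in $R$; since then $xe=\sigma(e)x+\delta(e)=ex$, such an $e$ is central in $S$. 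Hence $eS$ is generated by a central idempotent, which is exactly the type of generator I must produce.

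Next I set up the correspondence. Given a subset (resp.\ right ideal, resp.\ principal right ideal) $A$ of $S$, let $C_A\subseteq R$ be the set of all coefficients of all elements of $A$. Because $R$ is Baer (resp.\ quasi-Baer, resp.\ p.q.-Baer) and, being abelian, has central idempotents whose products are again idempotent, the right annihilator $r_R(C_A)$ equals $eR$ for a single idempotent $e$; for the p.q.-Baer case, where $C_A=\{a_0,\dots,a_n\}$ is the finite coefficient set of one generator, I would use that finite intersections $e_1R\cap\cdots\cap e_kR=e_1\cdots e_kR$ remain idempotent-generated in the abelian ring $R$. I then claim $r_S(A)=eS$. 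The inclusion $eS\subseteq r_S(A)$ is routine: for $f=\sum_i a_ix^i\in A$ one has $fe=\sum_i a_ie\,x^i=0$ since $a_ie=0$, and centrality of $e$ gives $f(eh)=(fe)h=0$ for all $h\in S$ (in the p.q.-Baer case $fse=fes=(fe)s=0$ likewise).

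The substance is the reverse inclusion $r_S(A)\subseteq eS$. Taking $g=\sum_j b_jx^j$ with $fg=0$ for every $f=\sum_i a_ix^i\in A$, I must show each $b_j\in r_R(C_A)=eR$, i.e.\ that $a_ib_j=0$ for all $i,j$; then $b_j=eb_j$ and $g=eg\in eS$. I would establish this coefficient-wise annihilation by induction on $\deg f+\deg g$. The coefficient of $x^{n+m}$ in $fg$ is $a_n\sigma^n(b_m)$, so $a_n\sigma^n(b_m)=0$ and hence $a_nb_m=0$ by iterating $(\mathcal{C}_{\sigma})$. Transporting this relation by reversibility and $\sigma$-compatibility, together with an analogue of Lemma~\ref{lem1} (that $ab=0$ forces $\sigma^k(a)b=\delta^k(a)b=0$ and symmetrically $a\delta^k(b)=0$), I would strip the leading terms, lower the degree, and iterate down to $a_ib_j=0$. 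For p.q.-Baer one notes $r_S(fS)\subseteq r_S(f)\subseteq eS\subseteq r_S(fS)$, so equality holds once $r_S(f)\subseteq eS$ is known.

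The main obstacle I anticipate is precisely the off-leading part of this induction: the non-top coefficients of $fg$ are sums $\sum_{\ell\le i}a_i f_\ell^i(b_j)$ in which $\sigma$ and $\delta$ are entangled through the words $f_\ell^i$, so isolating the individual products $a_ib_j$ is not formal and hinges on controlling the derivation. Concretely, I would need the $\delta$-side relations ($ab=0\Rightarrow a\delta^k(b)=0$ and $\delta^k(a)b=0$) to hold under the Section~\ref{sec}-type hypotheses \emph{alone}, that is, a $\delta$-compatibility statement derived from reversibility and $(\mathcal{C}_{\sigma})$ rather than borrowed from a skew-Armendariz assumption; verifying this, and then checking that the degree induction genuinely closes simultaneously for the subset, right-ideal, and principal-right-ideal versions, is where the real work lies. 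Once it is in place, the produced $e\in R$ is a central idempotent of $S$ with $r_S(A)=eS$, proving that $S$ is Baer, quasi-Baer, and p.q.-Baer respectively.
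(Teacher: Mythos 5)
Your framework is the same as the paper's: form the coefficient set $A^*$ of $A$, write $r_R(A^*)=eR$, use Lemma~\ref{lem4} to make $e$ central with $\sigma(e)=e$ and $\delta(e)=0$, deduce $eR[x;\sigma,\delta]\subseteq r_{R[x;\sigma,\delta]}(A)$, and reduce the reverse inclusion to coefficient-wise annihilation. The gap is exactly the step you flag yourself: you propose to close the degree induction by first proving a $\delta$-compatibility statement ($ab=0\Rightarrow a\delta^k(b)=0$ and $\delta^k(a)b=0$) ``derived from reversibility and $(\mathcal{C}_{\sigma})$'' alone. That statement is \emph{false}. Take $R=\Z_2[t]/(t^2)$, $\sigma=\mathrm{id}$, and the derivation $\delta$ with $\delta(t)=1$ (well defined, since $\delta(t\cdot t)=t\delta(t)+\delta(t)t=0$ in characteristic $2$). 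This $R$ is commutative, hence reversible and right $\sigma$-reversible, satisfies $(\mathcal{C}_{\sigma})$ vacuously, and $\sigma(1)=1$; yet $t\cdot t=0$ while $t\delta(t)=t\neq 0$. (The example is not Baer, so it does not contradict the theorem; it only kills the intermediate lemma your induction rests on.) Hence no amount of work closes your induction from the hypotheses you allow yourself.

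The paper's proof avoids needing $\delta$-compatibility for arbitrary elements, and that is the essential difference. Its induction does not propagate pairwise relations $a_ib_j=0$ by compatibility; it propagates membership of the $b_j$ in $eR$. Once $b_m,\dots,b_{k+1}\in eR$, every entangled term $a_if_j^i(b_l)$ with $l>k$ dies not by $\delta$-compatibility but because $\sigma(e)=e$ and $\delta(e)=0$ give $f_j^i(eb)=ef_j^i(b)$, so $f_j^i(b_l)\in eR$ and $A^*e=0$; what survives of each equation is $a_n\sigma^n(b_k)=0$, whence $a_nb_k=0$ by iterating $(\mathcal{C}_{\sigma})$. In other words, the Baer (resp.\ quasi-Baer, p.q.-Baer) hypothesis itself supplies the control of $\delta$, through the $(\sigma,\delta)$-stability of $eR$, that you tried to manufacture from the ring axioms. (Even this step needs care: to place $b_k$ in $eR$ one must see that \emph{all} coefficients in $A^*$, not just leading ones, annihilate it --- a point the paper passes over quickly.) If you want to keep your compatibility route, the legitimate way to get it is to use Baerness: $R$ abelian (Lemma~\ref{lem4}), reversible and p.q.-Baer is semiprime, hence reduced, hence $\sigma$-rigid by $(\mathcal{C}_{\sigma})$, hence $(\sigma,\delta)$-compatible by \cite[Lemma 2.2]{hashemi/quasi}; with that in hand your induction closes. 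Note finally that for part (3) the paper does not rerun the induction at all but invokes \cite[Proposition 3.1]{louzari2} after noting that $Re$ is $(\sigma,\delta)$-stable; your finite-intersection-of-central-idempotents treatment is a reasonable substitute for that reduction, but it inherits the same unproved key step.
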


\begin{proof}$(1)$. Suppose that $R$ is Baer. Let $A$ be a nonempty subset of $R[x;\sigma,\delta]$ and $A^*$ be the set of all coefficients of elements of $A$. Then $A^*$ is a nonempty subset of $R$ and so $r_R(A^*)=eR$ for some idempotent element $e\in R$. Since $e\in r_{R[x;\sigma,\delta]}(A)$ by Lemma \ref{lem4}. We have $eR[x;\sigma,\delta]\subseteq r_{R[x;\sigma,\delta]}(A)$. Now, let $0\neq q=b_0+b_1x+b_2x^2+\cdots+b_mx^m\in r_{R[x;\sigma,\delta]}(A)$. Then $Aq=0$ and hence $pq=0$ for any $p\in A$. Let $p=a_0+a_1x+a_2x^2+\cdots+a_nx^n$, then
$pq=\sum_{i=0}^n\sum_{k=0}^m\parth{a_i\sum_{j=0}^if_j^i(b_k)}x^{j+k}=0.$
So, we have the following system of equations:
$$a_n\sigma^n(b_m)=0;\eqno(0)$$
$$a_n\sigma^n(b_{m-1})+a_{n-1}\sigma^
{n-1}(b_m)+a_nf_{n-1}^n(b_m)=0;\eqno(1)$$
$$a_n\sigma^n(b_{m-2})+a_{n-1}\sigma^{n-1}(b_{m-1})+
a_nf_{n-1}^n(b_{m-1})+a_{n-2}\sigma^{n-2}(b_m)\eqno(2)$$
$$+a_{n-1}f_{n-2}^{n-1} (b_m)+a_nf_{n-2}^n(b_m)=0;$$
$$a_n\sigma^n(b_{m-3})+a_{n-1}\sigma^{n-1}(b_{m-2})+
a_nf_{n-1}^n(b_{m-2})+a_{n-2}\sigma^{n-2}(b_{m-1})\eqno(3)$$
$$+a_{n-1}f_{n-2}^{n-1}(b_{m-1})+a_nf_{n-2}^n(b_{m-1})+a_{n-3}
\sigma^{n-3}(b_m)+a_{n-2}f_{n-3}^{n-2}(b_m)$$
$$+a_{n-1}f_{n-3}^{n-1}(b_m)+a_nf_{n-3}^n(b_m)=0;$$
$$\cdots\cdots\cdots\cdots$$
$$\sum_{j+k=\ell}\;\;\sum_{i=0}^n\;
\sum_{k=0}^m\parth{a_i\sum_{j=0}^if_j^i(b_k)}=0;\eqno(\ell)$$
$$\cdots\cdots\cdots\cdots$$
$$\sum_{i=0}^na_i\delta^i(b_0)=0.\eqno(n+m)$$

From Eq. $(0)$, we have $a_nb_m=0$ then $b_m\in r_R(A^*)=eR$. From Eq. $(1)$, we have $a_n\sigma^n(b_{m-1})=0$, because $a_{n-1}\sigma^{n-1}(b_m)=a_nf_{n-1}^n(b_m)=0$. So, we have $a_nb_{m-1}=0$ then $b_{m-1}\in r_R(A^*)=eR$. From Eq. $(2)$, we have $a_n\sigma^n(b_{m-2})=0$, because $a_{n-1}\sigma^{n-1}(b_{m-1})=a_nf_{n-1}^n(b_{m-1})=a_{n-2}\sigma^{n-2}(b_m)=a_{n-1}f_{n-2}^{n-1}(b_m)=a_nf_{n-2}^n(b_m)=0$. So $b_{m-2}\in r_R(A^*)=eR$. Continuing this procedure yields $b_m,b_{m-1},b_{m-2},\cdots, b_0\in r_R(A^*)$. So, we can write $q=eb_0+eb_1x+eb_2x^2+\cdots+b_mx^m\in eR[x;\sigma,\delta]$. Therefore $eR[x;\sigma,\delta]=r_{R[x;\sigma,\delta]}(A)$. Consequently, $R[x;\sigma,\delta]$ is a Baer ring.
\par$(2)$ The proof for the case of the quasi-Baer property follows in a similar fashion; In fact, for any right ideal $A$ of $R[x;\sigma,\delta]$, take $A^*$ as the right ideal generated by all coefficients of elements of $A$.
\par$(3)$ Since $R$ is right $\sigma$-reversible then $Re$ is $(\sigma,\delta)$-stable for all idempotents $e\in R$ by Lemma \ref{lem4}. So by \cite[Proposition 3.1]{louzari2}, we have the result.
\end{proof}

By \cite[Lemma 2.2]{hashemi/quasi}, $\sigma$-rigid rings satisfy the condition $(\mathcal{C_{\sigma}})$, so Theorem \ref{theo baer qbaer} is a partial generalization of \cite[Theorem 11, Corollaries 12 and 15]{hong/2000}.

\section*{acknowledgments}The authors express their gratitude to Professor Laiachi EL Kaoutit for valuable remarks and helpful comments. The second author wishes to thank Professor Amin Kaidi of University of Almer\'ia for his generous hospitality. This work was supported by the project PCI Moroccan-Spanish A/011421/07.

\end{document}